\definecolor{DarkBlue}{rgb}{0.00,0.00,0.55}
\definecolor{Black}{rgb}{0.00,0.00,0.00}
\title{Distinct solutions of finite-dimensional complementarity problems}
\author{
  M.~Croci\thanks{Mathematical Institute, University of Oxford, Oxford, UK
    (\texttt{matteo.croci@maths.ox.ac.uk}).}
 \and
  P.~E.~Farrell\thanks{Mathematical Institute, University of Oxford, Oxford, UK.
    Center for Biomedical Computing, Simula Research Laboratory, Oslo, Norway
    (\texttt{patrick.farrell@maths.ox.ac.uk}).\newline
    This research is funded by EPSRC grants EP/K030930/1, EP/M019721/1,
    and a Center of Excellence grant from the Research Council of Norway to the Center for Biomedical Computing at Simula Research Laboratory. The authors would like to acknowledge useful discussions with T.~S.~Munson and
    N.~I.~M.~Gould.
    }
  }
\begin{document}

\maketitle

\begin{abstract}
Complementarity problems often permit distinct solutions, a fact of major
significance in optimization, game theory and other fields. In this paper, we
develop a numerical technique for computing multiple isolated solutions of
complementarity problems, starting from the same initial guess. This technique,
called deflation, is applied in conjunction with existing algorithms that reformulate
the complementarity problem as the rootfinding problem of a semismooth residual.
After one solution is found, the idea of deflation is to apply operators to the
arguments of the corresponding semismooth reformulation to ensure that
solvers will not converge to that same solution again. This ensures that if the semismooth
solver is restarted from the same initial guess and it converges, it will
converge to a different solution.
We prove theoretical results on the effectiveness of the method, and
apply it to several difficult finite-dimensional complementarity problems from the
literature. While deflation is not guaranteed to find all solutions, for every problem considered with a finite number of solutions, we identify
initial guesses from which all known solutions are computed with deflation.
\end{abstract}

\begin{keywords}
deflation, complementarity, variational inequality, semismooth Newton's method, distinct solutions.
\end{keywords}

\begin{AMS}
90C33, 65K15, 49M15, 49M29, 49M37, 90C26.
\end{AMS}

\section{Introduction}
Complementarity problems are an important generalisation of systems of nonlinear
equations that incorporate inequality constraints. They arise in many areas of applied
mathematics, most prominently as the Karush--Kuhn--Tucker (KKT) first order optimality
conditions for optimization with inequality constraints. They also have
important applications in contact mechanics, game theory, economics, finance,
fracture mechanics, and obstacle problems \cite{ferris1997}.

Complementarity problems often admit multiple solutions, which are
typically significant for the application at hand. For example, a nonconvex
optimization problem can permit several local minima, while a bimatrix game can
permit multiple Nash equilibria. In this paper, we develop a novel numerical
technique that can successfully identify multiple solutions of complementarity
problems, provided they exist and are isolated from each other. Our approach builds upon
existing state-of-the-art complementarity solvers, to enable them to find
multiple solutions of complementarity problems starting from the same initial
guess.

The technique we develop is called deflation. The first algorithm in this
spirit was designed by Wilkinson \cite{wilkinson1963} to find distinct roots of
polynomials. The basic idea of deflation is: given a problem and one of its
solutions, construct a new problem which retains all solutions except for the
one deflated. This ensures that different solutions can be
identified if the algorithm is restarted after each deflation. For example,
given a polynomial $p(x)$ and a root $r$, one may form the
deflated polynomial $g(x) = p(x)/(x-r)$ and apply Newton's method to $g$.
Deflation was extended to nonlinear algebraic systems by Brown and Gearhart
\cite{brown1971} and to nonlinear partial differential equations by Farrell et
al.\ \cite{farrell2014}. The aim of this paper is to extend this approach to
finite-dimensional complementarity problems.

In this work we show that the standard deflation techniques introduced by Brown and
Gearhart and Farrell et al.\ are not sufficient to ensure nonconvergence to
known solutions for complementarity problems. We thus define a new class of
deflation operators, called complementarity deflation operators, and construct
instances of these operators with numerically desirable properties. We then
prove 
theoretical results about the effectiveness of these operators at eliminating
known solutions.

The importance of multiple solutions of complementarity problems has motivated
other authors to develop various approaches for computing them. A simple
strategy is to vary the initial guess given to the solver
\cite{vonstengel2002}, but this is heuristic and labour-intensive
\cite{tinloi2003}. Judice and Mitra \cite{judice1988} develop an algorithm for
enumerating the solutions of linear complementarity problems. Their
algorithm requires exhaustive exploration of a binary tree whose size is
exponential in terms of the size of the problem, and is thus impractical for
large problems. Tin-Loi and Tseng \cite{tinloi2003} develop an algorithm for
finding multiple solutions of linear complementarity problems by augmenting the
problem with constraints that eliminate known solutions; while very successful
on the problems considered, the size of each problem increases with each
solution eliminated. By contrast, the technique presented here does not increase
the size of the problems to be solved after each solution found. The closest
previous work is that of Kanzow \cite{kanzow2000}, who uses a similar idea to
improve the convergence of a semismooth solver to a single solution.  If the
semismooth solver runs into difficulty at a point, then Kanzow applies a
standard deflation operator to the semismooth
residual to encourage the solver to escape from the difficult point. As we
demonstrate later, the operator applied is not sufficient to guarantee
nonconvergence to points deflated with it. However, the general idea of applying
deflation to avoid points at which the solver performs poorly is a useful one,
and will be exploited in a later example.

Merely removing known solutions from consideration might not be sufficient to
make deflation a practical technique for computing distinct solutions of
difficult complementarity problems: after all, deflation guarantees
nonconvergence to known solutions, but does not guarantee convergence to unknown
solutions. We therefore investigate the effectiveness of the technique on difficult
algebraic complementarity problems from the literature, in combination with a
standard semismooth complementarity solver \cite{facchinei1997b,sun2002}.  For
all considered problems with a finite solution set, we identify at least
one initial guess that converges to all known solutions, demonstrating its
potential.

The source code for the semismooth solver, deflation algorithm and the
computational examples is included as supplementary material.

\section{Background on complementarity and deflation}
\subsection{Complementarity}
A complementarity problem is a generalisation of a nonlinear system of equations
which is defined by a vector function $F(z)$, which we call the problem residual,
and by lower and upper bounds for the variable $z$.
\begin{definition}[Mixed complementarity problem (MCP)]
Let $F:\mathbb{R}^n\rightarrow \mathbb{R}^n$ be the
problem residual. Given the
lower and upper bounds, 
\begin{align*}
l \in (\mathbb{R}\cup\{-\infty\})^n,\hspace{12pt}u \in(\mathbb{R}\cup\{+\infty\})^n,
\end{align*}
with $l_i \leq u_i$ for all $i$, then the mixed complementarity problem
MCP$(F,l,u)$ is to find $z\in \mathbb{R}^n$ such that for each $i$, one
of the following conditions holds:
\begin{align}
l_i = z_i\phantom{u_i =\hspace{1mm} } &\text{ and } F_i(z) \ge 0, \nonumber \\
l_i < z_i < u_i           &\text{ and } F_i(z) = 0,             \\
\phantom{l_i < }z_i = u_i &\text{ and } F_i(z) \le 0. \nonumber
\end{align}
The set $\mathscr{F}=\{z\in \mathbb{R}^n: l_i\leq z_i \leq
u_i \text{ for all } i\}$ is called the feasible set of MCP$(F,l,u)$.
\end{definition}

A simple example of an MCP arising in practice can be found when looking for
local minima of a continuously differentiable function
$f:\mathbb{R}^n\rightarrow \mathbb{R}$ in the feasible region
$\mathscr{F}=\{z\in \mathbb{R}^n: l_i\leq z_i \leq u_i\hspace{6pt}$for all
$i\}$. The KKT optimality conditions are MCP$(\nabla f,l,u)$. MCPs also arise
as optimality conditions for more complicated optimization problems involving
general inequality constraints \cite{nocedal2006}.

An important specialisation of an MCP is when the lower and upper bounds are
zero and infinity respectively. This problem is then called a nonlinear
complementarity problem.
\begin{definition}[Nonlinear complementarity problem (NCP)]
\label{def:NCP_def}
The nonlinear complementarity problem NCP$(F)$ is equivalent to
MCP$(F,0,\infty)$ and can be expressed as follows. Find $z\in \mathbb{R}^n$ such
that:
\begin{flalign}
\label{eqn:ncp}
0\leq z \perp F(z)\geq 0,
\end{flalign}
where $\perp$ signifies that $z_iF_i(z)=0
$ for all $i$.
\end{definition}
For simplicity, we develop our theory of complementarity deflation
in the context of NCPs, and later extend our approach to the case of general
MCPs.

The class of solvers considered in this work relies on the reformulation of an
NCP as a semismooth rootfinding problem. Central to this reformulation is the
concept of an NCP function.
\begin{definition}[NCP function]
A function $\phi:\mathbb{R}^2\rightarrow \mathbb{R}$ is an NCP function if for all $(a,b)\in \mathbb{R}^2$ it satisfies
\begin{align*}
\phi(a,b)=0\hspace{6pt}\Longleftrightarrow\hspace{6pt}0\leq a \perp b \geq 0.
\end{align*}
\end{definition}
The NCP function used in our computations is the Fischer-Burmeister function
\cite{fischer1992},
\begin{align}
\phi_{FB}(a,b)=\sqrt{a^2+b^2}-a-b.
\end{align}
An NCP function can be used to construct an operator for the reformulation of
complementarity problems.
\begin{definition}[NCP operator]
An operator $\Phi:\mathbb{R}^n\times \mathbb{R}^n\rightarrow \mathbb{R}^n$ is an
NCP operator if for all $z,w\in \mathbb{R}^n$ it satisfies
\begin{align}
\label{eq:NCP_operator_definition}
\Phi(z,w)=0\hspace{6pt}\Longleftrightarrow\hspace{6pt}0\leq z \perp w \geq 0.
\end{align}
\end{definition}
For example, an NCP operator can be defined via any NCP function by
\begin{align}
\label{eq:NCP_operator_particular}
\Phi_i(z,w)=\phi(z_i,w_i).
\end{align}
A candidate $z$ is a solution of NCP$(F)$ if and only if it is a root
of $\Phi(z, F(z))$. Thus, for a given $F$, we can define the associated
NCP residual.
\begin{definition}[NCP residual]
Given an NCP$(F)$, its NCP residual is
\begin{align}
\Psi(z) = \Phi(z, F(z)).
\end{align}
\end{definition}
In this manner, solving the NCP can be reformulated as finding the roots of
an associated NCP residual.

In this work we set $\Phi$ and $\Psi$ to be the NCP operator and residual induced by the
Fischer--Burmeister NCP function. $\Psi$ is not continuously differentiable, but
is semismooth, and a semismooth generalisation of Newton's method can be
employed to find its roots \cite{ulbrich2011}. The solver we use in our later
examples is the semismooth Newton solver of Facchinei et al.\
\cite{facchinei1997b} augmented with the projected linesearch method of Sun et
al.\ \cite{sun2002} to obtain a feasible method.

\subsection{Deflation of nonlinear equations}
Another important specialisation of the mixed complementarity problem is the
choice $l = -\infty, u = \infty$. This yields the problem of solving a
nonlinear equation, or nonlinear rootfinding: find $z \in \mathbb{R}^n$ such
that $F(z) = 0$.  We now briefly present the theory of deflation for nonlinear
rootfinding, as described in Brown and Gearhart and Farrell et al.\
\cite{brown1971,farrell2014}.

Given a residual $F: \mathbb{R}^n \rightarrow \mathbb{R}^n$ and a solution $r
\in \mathbb{R}^n$ such that $F(r) = 0$ and its Jacobian $F'(r)$ is nonsingular, the deflation
technique constructs a modified residual $G$ with two properties: first, a
candidate solution $z \neq r$ is a root of $F$ if and only if it is
a root of $G$; and second, that standard nonlinear rootfinding methods such
as Newton's method will not converge to $r$ again. This $G$ is constructed
with a deflation operator, which we now define.
\begin{definition}[Deflation operator \cite{brown1971}]
For $r \in \mathbb{R}^n, z \in \mathbb{R}^n \setminus \{r\}$, let
$M(z; r) \in \mathbb{R}^{n \times n}$. We say $M$ is a deflation operator if
$M(z; r)$ is invertible for all $z$ and $r$, and for all $r$ such that $F(r) = 0$
and $F'(r)$ is nonsingular, we have
\begin{align}
\liminf\limits_{z_k \rightarrow r} \|M(z_k; r) F(z_k)\| > 0,
\end{align}
for any sequence $\{z_k\}$ converging to $r$.
\end{definition}
The deflated residual is constructed via $G(z) = M(z; r) F(z)$. Essentially, a
deflation operator $M$ eliminates a root $r$ from consideration by ensuring that
the norm of the deflated residual does not converge to zero along any sequence
converging to $r$; since rootfinding algorithms seek sequences converging to zero, they will not yield a sequence converging to $r$ again. We will 
refer to these operators as standard deflation operators, to contrast them with
the stronger complementarity deflation operators developed in this work.

A typical example of a deflation operator is norm deflation,
\begin{equation} \label{eqn:norm_deflation}
G(z) = M(z; r)F(z) = \frac{I}{\|z - r\|} F(z),
\end{equation}
which is the deflation operator used to aid convergence in Kanzow \cite{kanzow2000}.
Farrell et al.\ propose a shifted norm deflation operator
\begin{equation} \label{eqn:standard_deflation}
G(z) = M(z; r)F(z) = \left(\frac{I}{\|z - r\|^p} + \alpha\right) F(z),
\end{equation}
for a shift $\alpha \geq 0$ and power $p \geq 1$, as this has desirable
numerical properties far from previously found solutions for $\alpha \neq 0$.
As $\|z - r\| \rightarrow \infty$, the deflated residual
behaves asymptotically as $\alpha F(z)$. With $\alpha = 0$, the deflated residual
would approach zero and solvers would erroneously recognize values of $z$ far
from $r$ as solutions.

\subsection{Standard deflation operators fail on complementarity problems}
\label{sec:counterexample}
We close this section with a counterexample demonstrating that standard deflation
operators (developed for nonlinear equations) do not work on nonlinear
complementarity problems. More precisely, even after applying a standard deflation
operator to the NCP residual, there exist sequences $z_k \rightarrow r$ for which the NCP
residual still converges to zero, as the Jacobian of the NCP residual may not
exist or may not be invertible. Following \cite{kanzow2000}, let us apply the
standard norm deflation operator \eqref{eqn:norm_deflation} to $\Psi$:
\begin{equation} \label{eqn:standard_doesnt_work}
\hat{\Psi}(z) = M(z; r)\Phi(z, F(z)) = \frac{\Phi(z, F(z))}{\|z - r\|}.
\end{equation}
The property we desire is: for any problem NCP$(F)$, solution $r$, and
sequence $z_k \rightarrow r$, we hope that
\begin{equation}
\liminf \limits_{z_k \rightarrow r} \|\hat{\Psi}(z_k)\| > 0.
\end{equation}
We now give an example $F, r$ and $\{z_k\}$ for which this does not
hold with standard deflation operators.

First note that with the Fischer--Burmeister NCP function, applying a standard
deflation operator to the NCP residual is equivalent to applying it to its
arguments, as
\begin{equation}
m\phi(a, b) = \sqrt{(ma)^2 + (mb)^2} - ma - mb = \phi(ma, mb),
\end{equation}
where $m \in \mathbb{R}$.
Let $z = [x, y]^T, F(z) = [y + y^2, y + x + 1]^T$
and consider NCP$(F)$. This problem admits $r = [1, 0]^T$ as a solution, with residual $[0,
2]^T$, so that $\Phi(r, F(r)) = 0$.
Thus, after deflation of $r$, we have a deflated position argument
\begin{equation}
H(z) = \left[\dfrac{x}{\sqrt{(x-1)^2+y^2}},\dfrac{y}{\sqrt{(x-1)^2+y^2}}\right]^T,\\
\end{equation}
and a deflated residual argument
\begin{equation}
G(z) = \left[\dfrac{y+y^2}{\sqrt{(x-1)^2+y^2}},\dfrac{y+x+1}{\sqrt{(x-1)^2+y^2}}\right]^T,
\end{equation}
and can write the deflated NCP residual as
\begin{equation}
\hat{\Psi}(z) = \Phi(H(z), G(z)).
\end{equation}
Let us now take the limit as $z_k \rightarrow r$. $H_1$ and $G_2$ diverge to
infinity in the limit as the numerator is bounded away from zero and the denominator
converges to zero.
Since $y_k \rightarrow 0$, $G_1 \sim H_2$ as $z_k \rightarrow
r$. Now observe that, according to the path chosen by the converging sequence,
we have
\begin{align*}
\lim\limits_{z_k\rightarrow r} G_1(z_k)=\lim\limits_{z_k\rightarrow r} H_2(z_k)=
\begin{dcases}\begin{array}{ll}
c\neq 0, & \text{if }\lim\limits_{z_k\rightarrow r}\frac{|y_k|}{(x_k-1)^2} >0,\\
0, & \text{if }\lim\limits_{z_k\rightarrow r}\frac{|y_k|}{(x_k-1)^2}=0.
\end{array}
\end{dcases}
\end{align*}
Hence there exist paths that have in the limit
$G_1H_1 = G_2H_2 = 0$; take for example the path 
$x_k \downarrow 1, y_k = 0$.
Hence, if the solver takes any of these paths, it can converge to the same solution
again: standard deflation operators fail to prevent all the entries of
$\hat{\Psi}$ from converging to zero as $z_k \rightarrow r$.

This example shows that we must augment the requirements of standard deflation
operators.  We now proceed to construct a stronger class of deflation operators,
complementarity deflation operators, that do work for complementarity problems.

\section{Deflation for complementarity problems}
The basic paradigm adopted is the following: we wish to construct operators $H$ and
$G$ such that
\begin{equation}
\liminf \limits_{z_k \rightarrow r} \|\hat{\Psi}(z_k)\| = \liminf \limits_{z_k \rightarrow r} \|\Phi\left(H(z_k; r), G(F, z_k; r)\right)\| > 0,
\end{equation}
and which preserve solutions other than $r$: for $z \neq r$,
\begin{equation}
\hat{\Psi}(z) = 0 \iff \Psi(z) = 0.
\end{equation}
The essential problem of the previous example was that standard deflation
operators only guarantee that \emph{at least one} component of the object
deflated will not converge to zero as $z_k \rightarrow r$.  That is,
\begin{equation} \label{eqn:weak_requirement}
\liminf \limits_{z_k \rightarrow r} \|M(z_k; r)F(z_k)\| > 0 \iff 
\exists j \text{ s.t. } \liminf \limits_{z_k \rightarrow r} |(M(z_k; r)F(z_k))_j| > 0,
\end{equation}
and while at least one component $j$ does not converge to zero, others might,
depending on the particular path taken to approach $r$.
The first component of the deflated position vector $(Mz)_1$ and the second
component of the deflated problem residual $(MF)_2$ did not converge to zero, but their
respective multiplicands $(MF)_1$ and $(Mz)_2$ did converge to zero; their products
converged to zero, and so the NCP residual converged to zero overall. Hence, we
must choose $H$ and $G$ so that \emph{at least one product} $H_jG_j$ does not
converge to zero.

The simplest way to achieve this property is to ensure that at least one
component of the deflated problem residual $G$ does not converge to zero, and that \emph{all} components
of the deflated position vector $H$ do not converge to zero\footnote{One could choose to swap
the requirements for the arguments, but the chosen strategy is more
straightforward. $G$ acts on the problem residual $F$, an arbitrary
(differentiable) function, while $H$ acts only on the identity function over the
feasible region of the NCP, and hence $H$ is easier to constrain.}.

First let us consider the deflation of the problem residual, $G(F, z; r)$.
We summarize the properties we require of $G$ in the following definition.
\begin{definition}[Weak complementarity deflation operator]
Let $\mathscr{F}$ be the feasible region of an NCP, i.e.
\begin{equation}
\label{eq:feasible_region}
\mathscr{F} = \{z\in \mathbb{R}^n: z \ge 0\},
\end{equation}
and let $C^1(\mathscr{F})$ be the space of continuously-differentiable functions
on $\mathscr{F}$.

We say $G: C^1(\mathscr{F}) \times \mathscr{F} \times \mathscr{F} \rightarrow
\mathbb{R}^n$ is a weak complementarity deflation operator if, for any $F \in
C^1(\mathscr{F})$ and for any $r \in \mathscr{F}, z \in \mathscr{F} \setminus \{r\}$ there exists at least one $j \in \{1,
\dots, n\}$ such that
\begin{equation} \label{eqn:weak1}
\liminf \limits_{z_k \rightarrow r} |G_j(F, z_k; r)| > 0,
\end{equation}
and if
\begin{equation}
\label{eq:weak_op_sign_preservation}
\mathrm{sign}[F(z)] = \mathrm{sign}[G(F, z; r)] \text{ for all } z \in \mathscr{F} \setminus \{r\}.
\end{equation}
\end{definition}
Standard deflation operators satisfy \eqref{eqn:weak1} if $F'(r)$ is nonsingular. The standard deflation operators
\eqref{eqn:norm_deflation} and \eqref{eqn:standard_deflation} are also
sign-preserving, and so these operators are also weak complementarity deflation
operators. In practice, it is only necessary to apply a weak complementarity
deflation operator to $F$ if $F(r) = 0$; if $F(r) \neq 0$, then $G(F, z; r) = F(z)$
satisfies the requirements of a weak complementarity deflation operator.

We now turn our attention to the deflated position vector. We summarize the
properties we seek of $H(z; r)$ in the following definition.
\begin{definition}[Complementarity deflation operator]
\label{def:compl_defl_op}
Let $F \in C^1(\mathscr{F})$, where $\mathscr{F}$ is the feasible region of the NCP \eqref{eq:feasible_region}.
Suppose NCP$(F)$ has isolated solutions: that is, the minimum distance
between solutions $\delta_r \in (0, \infty]$. Choose $\delta \in (0, \delta_r]$. Let $B(r, \delta)$ be the open ball of
radius $\delta$ centred at $r$ that excludes $r$, i.e.
\begin{equation}
B(r, \delta) = \{z \in \mathscr{F}: \|z - r\| \in (0, \delta)\}.
\end{equation}

We say $H: \mathscr{F} \times \mathscr{F} \rightarrow \mathbb{R}^n$ is a complementarity deflation operator
if for $r \in \mathscr{F}, z_k \in \mathscr{F} \setminus \{r\}$,
\begin{equation}
\label{eq:compl_defl_bounded_away}
\liminf \limits_{z_k \rightarrow r} |H_j(z_k; r)| > 0 \text{ for all } j,
\end{equation}
and
\begin{align}
\label{eq:comp_defl_sign_preservation}
\mathrm{sign}[z_j] &=   \mathrm{sign}[H_j(z; r)] \text{ for all } z \in \mathscr{F} \setminus B(r, \delta), \nonumber \\
\mathrm{sign}[z_j] &\le \mathrm{sign}[H_j(z; r)] \text{ for all } z \in \mathscr{F} \cap B(r, \delta).
\end{align}
\end{definition}

Our first task is to show that the application of deflation to
an NCP preserves other solutions.
\begin{lemma}[Preservation of other solutions] \label{lem:sol_preservation}
Let $r$ be a solution of NCP$(F)$.
Then, for all $z
\in \mathscr{F} \setminus \{r\}$, we have
\begin{equation}
\Phi(z, F(z)) = 0 \iff \Phi(H(z; r), G(F, z; r)) = 0.
\end{equation}
\end{lemma}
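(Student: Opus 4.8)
The plan is to reduce the claimed equivalence to a component-by-component complementarity comparison and then match the two sides using only the sign hypotheses on $G$ and $H$. First I would unfold the NCP operator: by \eqref{eq:NCP_operator_definition} and \eqref{eq:NCP_operator_particular}, for any $a,b\in\mathbb{R}^n$ one has $\Phi(a,b)=0$ if and only if $a_i\ge 0$, $b_i\ge 0$ and $a_ib_i=0$ for every $i$. So the lemma reduces to showing that the complementarity system in $(z,F(z))$ holds exactly when the complementarity system in $(H(z;r),G(F,z;r))$ holds. Note that only the sign-preservation conditions \eqref{eq:weak_op_sign_preservation} and \eqref{eq:comp_defl_sign_preservation} will be needed; the $\liminf$ conditions \eqref{eqn:weak1} and \eqref{eq:compl_defl_bounded_away} play no role in solution preservation.

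The key observation is that the sign hypotheses translate into precisely the relations required. From \eqref{eq:weak_op_sign_preservation}, for $z\neq r$ we get $G_i\ge 0\iff F_i(z)\ge 0$ and, since $\mathrm{sign}$ vanishes only at $0$, also $G_i=0\iff F_i(z)=0$. For $H$ the two regimes in \eqref{eq:comp_defl_sign_preservation} are used asymmetrically. Because $z\in\mathscr{F}$ forces $\mathrm{sign}[z_j]\ge 0$, both the equality regime (outside $B(r,\delta)$) and the inequality regime (inside $B(r,\delta)$) yield $H_j(z;r)\ge 0$, and in every case $H_j=0$ implies $z_j=0$. The converse implication $z_j=0\Rightarrow H_j=0$ is guaranteed only outside the ball, where the sign relation is an equality.

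For the forward direction I would argue as follows. Suppose $\Phi(z,F(z))=0$ with $z\neq r$; then $z$ is a second solution of NCP$(F)$, so by the isolation hypothesis $\|z-r\|\ge\delta_r\ge\delta$, whence $z\notin B(r,\delta)$ and the equality regime of \eqref{eq:comp_defl_sign_preservation} applies. Componentwise, $z_iF_i(z)=0$ means $z_i=0$ or $F_i(z)=0$; in the first case $H_i=0$ by the equality regime, in the second $G_i=0$, so $H_iG_i=0$ in either case, and together with $H_i\ge 0$ and $G_i\ge 0$ this gives $\Phi(H,G)=0$. I expect this to be the crux: isolation is exactly what places $z$ outside $B(r,\delta)$, and without it $z$ could in principle lie inside the ball, where $z_i=0$ no longer forces $H_i=0$ and the argument would break. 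This asymmetry is the whole point of weakening the $H$ sign condition to an inequality inside the ball.

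For the reverse direction no isolation is needed, since the implications $H_i=0\Rightarrow z_i=0$ and $G_i=0\iff F_i(z)=0$ both hold uniformly on $\mathscr{F}\setminus\{r\}$. Assuming $\Phi(H(z;r),G(F,z;r))=0$, from $G_i\ge 0$ I recover $F_i(z)\ge 0$, feasibility gives $z_i\ge 0$, and $H_iG_i=0$ forces $z_i=0$ or $F_i(z)=0$, so $z_iF_i(z)=0$ for all $i$; hence $\Phi(z,F(z))=0$, closing the equivalence.
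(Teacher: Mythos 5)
Your proof is correct, and it uses the same ingredients as the paper's own proof --- the sign conditions \eqref{eq:weak_op_sign_preservation} and \eqref{eq:comp_defl_sign_preservation} together with isolation to control $B(r,\delta)$ --- but with a genuinely different decomposition. The paper splits by \emph{region}: for $z \in \mathscr{F}\setminus B(r,\delta)$, $z\neq r$, the sign equalities give the componentwise equivalence of the two complementarity systems, hence the full biconditional there; for $z \in \mathscr{F}\cap B(r,\delta)$, isolation forces $\Phi(z,F(z))\neq 0$, and the paper shows $\Phi(H,G)\neq 0$ as well by exhibiting a violated component (if $z_j=0$ then $F_j(z)<0$, hence $G_j<0$; if $z_j>0$ then $F_j(z)\neq 0$, hence $H_j>0$ and $G_j\neq 0$). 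You split by \emph{direction} instead: your forward implication uses isolation to place any second solution outside the ball, where the equality regime finishes the componentwise matching, while your reverse implication is a single region-free argument from the uniform facts $H_j\ge 0$, $H_j=0\Rightarrow z_j=0$, and $G_j=0\iff F_j(z)=0$. Your reverse direction is essentially the contrapositive of the paper's inside-ball case, but stated globally; what this organization buys is the explicit observation that isolation is needed only for the preservation of genuine solutions and never for preventing spurious ones --- a point the paper's region-based proof leaves implicit, and which is consistent with its later remark in section \ref{sec:Mathiesen} that, when solutions are not isolated, deflation may destroy nearby true solutions (the forward direction failing) even though the underlying elimination mechanism still works.
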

\begin{proof}
As the solutions are isolated, there exists $\delta_r>0$ such that
the region $B(r, \delta_r)$ contains no solutions, and hence $B(r, \delta) \subseteq B(r, \delta_r)$
also contains no solutions.
First, consider $z\in\mathscr{F} \setminus B(r, \delta)$, $z \neq r$.
Properties \eqref{eq:comp_defl_sign_preservation} and \eqref{eq:weak_op_sign_preservation}
ensure that in this region, for all $j$,
\begin{equation*}
\mathrm{sign}[F_j(z)]=\mathrm{sign}\left[G_j(F, z; r)\right], \text{ and } \mathrm{sign}[z_j]=\mathrm{sign}\left[H_j(z; r)\right].
\end{equation*}
This implies that, for all $j$,
\begin{equation*}
0\leq z_j \perp F_j(z)\geq 0 \iff 0\leq H_j(z; r) \perp G_j(F, z; r)\geq 0.
\end{equation*}
Hence, the definition of an NCP function $\phi$ ensures that, for all $j$,
\begin{equation*}
\phi(z_j,F_j(z))=0 \iff \phi(H_j(z; r), G_j(F, z; r))=0,
\end{equation*}
and the result follows for $\mathscr{F} \setminus B(r, \delta)$, $z \neq r$.

Now consider $z \in \mathscr{F} \cap B(r, \delta)$. Since by assumption
$\Phi(z, F(z)) \neq 0$, we must show that $\Phi(H(z; r), G(F, z; r)) \neq 0$ also.
Properties \eqref{eq:comp_defl_sign_preservation} and \eqref{eq:weak_op_sign_preservation}
ensure that in this region, for all $j$,
\begin{equation*}
\mathrm{sign}[F_j(z)]=\mathrm{sign}\left[G_j(F, z; r)\right] \text{ and } \mathrm{sign}[z_j] \le \mathrm{sign}\left[H_j(z; r)\right].
\end{equation*}
As $\Phi(z, F(z)) \neq 0$, there exists at least one $j$ for which
\begin{equation}
\phi(z_j, F_j(z)) \neq 0.
\end{equation}
If $z_j = 0$,    then $F_j(z) < 0$.    Hence $H_j \ge 0$ and $G_j < 0$, so $\phi(H_j, G_j) \neq 0$.
If $z_j \neq 0$, then $F_j(z) \neq 0$. Hence $H_j > 0$   and $G_j \neq 0$, so $\phi(H_j, G_j) \neq 0$
also.
\end{proof}

We now show that applying this deflation strategy does achieve our desired
objective of eliminating deflated solutions from consideration.
\begin{theorem}[Complementarity deflation]
Let $F \in C^1({\mathscr{F}})$ such that NCP$(F)$ has isolated solutions. Let $r$
be a solution of NCP$(F)$. Let $G(F, z; r)$ be a weak
complementarity deflation operator and let $H(z; r)$ be a complementarity
deflation operator. Define the deflated NCP residual as
\begin{equation}
\hat{\Psi}(z) = \Phi(H(z; r), G(F, z; r)).
\end{equation}
Then
\begin{equation}
\liminf \limits_{z_k \rightarrow r} \|\hat{\Psi}(z_k)\| > 0.
\end{equation}
\end{theorem}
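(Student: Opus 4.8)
The plan is to reduce the claim to a single coordinate and then exploit the algebra of the Fischer--Burmeister function. By the defining property of the weak complementarity deflation operator $G$, there is an index $j$ with $\liminf_{z_k\to r}|G_j(F,z_k;r)|>0$. By property \eqref{eq:compl_defl_bounded_away}, the complementarity deflation operator $H$ satisfies $\liminf_{z_k\to r}|H_j(z_k;r)|>0$ for \emph{every} index, and in particular for this same $j$. Since $\|\hat{\Psi}(z_k)\|\ge|\hat{\Psi}_j(z_k)|=|\phi(H_j(z_k;r),G_j(F,z_k;r))|$, it suffices to prove $\liminf_{z_k\to r}|\phi(H_j,G_j)|>0$ for this distinguished coordinate.

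First I would fix the sign of $H_j$. Because $z_k\to r$, the tail of the sequence lies in $\mathscr{F}\cap B(r,\delta)$, where the second line of \eqref{eq:comp_defl_sign_preservation} gives $\mathrm{sign}[z_{k,j}]\le\mathrm{sign}[H_j(z_k;r)]$. Feasibility forces $z_{k,j}\ge 0$, so $\mathrm{sign}[H_j]\ge 0$ and hence $H_j\ge 0$; combined with $\liminf|H_j|>0$ this yields a constant $m_H>0$ with $H_j(z_k;r)\ge m_H$ for all large $k$. Thus the position argument is bounded away from zero and positive along the tail.

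The heart of the argument is then a case split on the sign of the residual argument $G_j$, using that $\phi(a,b)=0$ only when $a,b\ge 0$ and $ab=0$. Writing $m_G>0$ for a lower bound on $|G_j|$ along the tail, I would argue: (i) if $G_j>0$, rationalise to get $\phi(H_j,G_j)=-2H_jG_j/(\sqrt{H_j^2+G_j^2}+H_j+G_j)$ and use $\sqrt{H_j^2+G_j^2}\le H_j+G_j$ to obtain $|\phi|\ge H_jG_j/(H_j+G_j)\ge \tfrac{1}{2}\min(m_H,m_G)$; (ii) if $G_j<0$, then since $\sqrt{H_j^2+G_j^2}\ge H_j$ we get $\phi(H_j,G_j)\ge -G_j=|G_j|\ge m_G$. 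In either case $|\phi(H_j,G_j)|$ is bounded below by a fixed positive constant for all large $k$, so $\liminf_{z_k\to r}\|\hat{\Psi}(z_k)\|\ge\liminf_{z_k\to r}|\phi(H_j,G_j)|>0$.

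The main obstacle is precisely that the sign of the deflated residual argument $G_j$ near $r$ cannot be controlled in advance: the weak operator only guarantees $|G_j|$ is bounded away from zero, not its sign, so one cannot simply assert that both arguments are positive. The resolution is the case analysis above, where the positivity of $H_j$ (guaranteed by the position sign condition together with feasibility) ensures $\phi$ is bounded away from zero whether $G_j$ is positive or negative. A secondary subtlety is that when both arguments are positive they need not be bounded \emph{above}, so the naive estimate $\phi=\sqrt{H_j^2+G_j^2}-H_j-G_j$ is not manifestly bounded away from zero; the rationalised form circumvents this and delivers a bound depending only on the lower bounds $m_H$ and $m_G$.
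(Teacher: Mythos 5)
Your proof is correct, but it takes a genuinely different route from the paper's. The paper argues by contradiction and continuity: having fixed the index $j$ for which $\liminf |G_j| > 0$ (and hence $\liminf |H_j| > 0$), it supposes $\liminf \|\hat{\Psi}(z_k)\| = 0$, extracts a subsequence $w_k$ along which $\hat{\Psi}(w_k) \to 0$, passes to the limits $\alpha = \lim H_j(w_k;r)$ and $\beta = \lim G_j(F,w_k;r)$, and uses continuity of $\phi$ together with the defining property of an NCP function ($\phi(\alpha,\beta)=0 \iff 0 \le \alpha \perp \beta \ge 0$) to conclude that $\alpha = 0$ or $\beta = 0$, contradicting the two liminf bounds. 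That argument never uses the specific form of $\phi_{FB}$, nor the sign condition \eqref{eq:comp_defl_sign_preservation}, so it applies verbatim to any continuous NCP function. Your proof is instead direct and quantitative, and is specific to Fischer--Burmeister: you use sign preservation plus feasibility to pin $H_j \ge m_H > 0$ on the tail of the sequence, then split on the sign of $G_j$ and bound $|\phi_{FB}(H_j,G_j)|$ below by explicit constants via the rationalisation $\phi_{FB}(a,b) = -2ab/\bigl(\sqrt{a^2+b^2}+a+b\bigr)$. What each approach buys: the paper's proof is shorter and NCP-function-agnostic, but it tacitly assumes the limits $\alpha, \beta$ exist and are finite (strictly, one should pass to a further subsequence in the extended reals and treat divergent arguments separately, e.g.\ $H_j \to \infty$); your argument sidesteps that issue entirely, since your bounds require no upper bounds on $H_j$ or $G_j$ and no subsequence extraction, and it delivers an explicit lower bound $\min\bigl(\tfrac{1}{2}\min(m_H,m_G),\, m_G\bigr)$ on the deflated residual. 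The price is generality --- the algebra would have to be redone for a different NCP function --- and the use of the sign-preservation hypothesis on $H$, which the paper's proof of this theorem does not need.
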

\begin{proof}
By the properties of the weak complementarity deflation operator $G$, for any sequence $z_k \rightarrow r$ there exists at least
one index $j$ such that
\begin{equation} \label{eqn:star1}
\liminf \limits_{z_k \rightarrow r} |G_j(F, z_k; r)| > 0.
\end{equation}
By the properties of the complementarity deflation operator $H$,
\begin{equation} \label{eqn:star2}
\liminf \limits_{z_k \rightarrow r} |H_j(z_k; r)| > 0,
\end{equation}
for the same index $j$.
Now we suppose
\begin{equation}
\liminf \limits_{z_k \rightarrow r} \|\hat{\Psi}(z_k)\| = 0,
\end{equation}
and proceed by contradiction. This implies that there exists
a subsequence $w_k \rightarrow r$ such that
\begin{equation}
\lim \limits_{w_k \rightarrow r} \|\hat{\Psi}(w_k)\| = 0.
\end{equation}
Hence, the individual components $\hat{\Psi}_j$ must also be zero in the same limit.
From the continuity of the norm
and $\phi$, we then have that
\begin{align}
\lim \limits_{w_k \rightarrow r} |\phi(H_j(w_k; r), G_j(F, w_k; r))|
=
|\phi(\alpha,\beta)|=0,
\end{align}
where
\begin{align}
\alpha=\lim\limits_{w_k \rightarrow r}H_j(w_k; r) \text{ and } \beta=\lim \limits_{w_k \rightarrow r} G_j(F, w_k; r).
\end{align}
This can happen if and only if $0\leq\alpha\perp\beta\geq 0$. Therefore, at least one of $\alpha$ or $\beta$ must be $0$. Hence, for every sequence converging to $r$, there exists a subsequence $w_k$ such that at least one of the following holds,
\begin{align}
\lim \limits_{w_k \rightarrow r} |H_j(w_k; r)| = 0 \text{ or } \lim \limits_{w_k \rightarrow r}|G_j(F, w_k; r)|=0,
\end{align}
which contradicts \eqref{eqn:star1} and \eqref{eqn:star2}.
\end{proof}

It remains to construct a concrete instance of a complementarity deflation operator.
The operator used in our study is related to standard norm deflation. To
construct it, we first need to introduce the compactly supported
$C^\infty$ test function,
\begin{equation}
\label{eq:CHI}
\chi(z)=\begin{dcases}\begin{array}{ll}
\exp{\left(1+\dfrac{\delta}{\|z\|-\delta}\right)}, & \text{if }\|z\|<\delta,\\
0 & \text{if }\|z\|\geq\delta,
\end{array}
\end{dcases}
\end{equation}
Note that $\chi(0)=1$. The value $\delta$ is chosen in $(0, \delta_r]$ as in definition
\ref{def:compl_defl_op}. In practice, $\delta_r$ is not known in advance,
so $\delta$ is set to a small value. In the examples presented later we use
$\delta = 10^{-6}$ unless otherwise mentioned. The complementarity deflation operator
we consider is
\begin{equation}
\label{eq:NCP_defl_op}
H_j(z; r)=\frac{z_j+\chi(z-r)}{\mathlarger{\|z-r\|^{p}}},\hspace{12pt}\text{for all }z\in\mathscr{F}\setminus\{r\},
\end{equation}
where $p\geq 1$ is the power of the complementarity deflation operator.

\begin{lemma}
Operator \eqref{eq:NCP_defl_op} is a complementarity deflation operator.
\end{lemma}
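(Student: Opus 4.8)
The plan is to verify that the operator $H_j(z; r) = (z_j + \chi(z-r))/\|z-r\|^p$ satisfies the two defining conditions of a complementarity deflation operator: the "bounded away from zero" condition \eqref{eq:compl_defl_bounded_away} and the sign conditions \eqref{eq:comp_defl_sign_preservation}. I would handle these two requirements in turn, since they govern disjoint regions (the limiting behaviour near $r$ versus the sign structure on the feasible set).

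Let me work through the details.

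\begin{proof}[Proof plan]
First I would verify the limiting condition \eqref{eq:compl_defl_bounded_away}, that $\liminf_{z_k \to r} |H_j(z_k; r)| > 0$ for every $j$. The key observation is that $\chi(0) = 1$, and since $z_k \to r$ we have $z_k - r \to 0$, so by the continuity of $\chi$ the numerator satisfies $z_j + \chi(z_k - r) \to r_j + 1$. The denominator $\|z_k - r\|^p \to 0$ from above. I would split into two cases. If $r_j + 1 \neq 0$ (equivalently $r_j \neq -1$, which always holds since $r \in \mathscr{F}$ forces $r_j \geq 0$), the numerator is bounded away from zero while the denominator vanishes, so $|H_j| \to +\infty$ and the $\liminf$ is trivially positive. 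Since every $r_j \geq 0$, this case always applies and the numerator in fact tends to $r_j + 1 \geq 1 > 0$, so the conclusion holds for all $j$ simultaneously.

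Next I would verify the two sign conditions. For $z \in \mathscr{F} \setminus B(r,\delta)$, we have $\|z - r\| \geq \delta$, so by the definition \eqref{eq:CHI} of $\chi$ the support condition gives $\chi(z - r) = 0$. Hence $H_j(z; r) = z_j/\|z-r\|^p$, and since the denominator is strictly positive, $\mathrm{sign}[H_j(z;r)] = \mathrm{sign}[z_j]$, establishing the required equality. For $z \in \mathscr{F} \cap B(r,\delta)$, we have $0 < \|z - r\| < \delta$, so $\chi(z - r) \in (0, 1]$ is strictly positive. Thus the numerator is $z_j + \chi(z-r)$ where $z_j \geq 0$ (feasibility) and $\chi(z-r) > 0$, making the numerator strictly positive and hence $\mathrm{sign}[H_j(z;r)] = 1$. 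Since $\mathrm{sign}[z_j] \in \{0, 1\}$ for $z_j \geq 0$, we obtain $\mathrm{sign}[z_j] \le \mathrm{sign}[H_j(z;r)] = 1$ as required.

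I do not expect a genuine obstacle here, as both conditions follow directly from feasibility ($z_j, r_j \geq 0$) together with the defining properties of $\chi$, namely $\chi(0) = 1$, $\chi > 0$ on its support, and $\chi \equiv 0$ outside the ball of radius $\delta$. The one point demanding slight care is confirming that $\chi$ is genuinely continuous at $\|z\| = \delta$ (so that the limiting argument and the support-based sign argument are both rigorous); this follows because $\exp(1 + \delta/(\|z\|-\delta)) \to 0$ as $\|z\| \uparrow \delta$, matching the value $0$ assigned for $\|z\| \geq \delta$. The role of the shift $\chi(z-r)$ is precisely to rescue the components where $z_j \to r_j = 0$: without it, such components of $z_j/\|z-r\|^p$ could vanish along certain paths, exactly the failure mode exhibited in the counterexample of Section~\ref{sec:counterexample}.
\end{proof}
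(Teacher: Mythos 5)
Your proposal is correct and follows essentially the same route as the paper's proof: you use the compact support of $\chi$ to reduce the sign condition outside $B(r,\delta)$ to positivity of the denominator, use $z_j \geq 0$ together with $\chi > 0$ inside the ball for the inequality case, and use $\chi(0)=1$ with the vanishing denominator for the liminf condition. The only (harmless) stylistic difference is that you treat the numerator as a whole, converging to $r_j + 1 \geq 1$, whereas the paper splits $H_j$ into the two terms $z_j/\|z-r\|^p$ and $\chi(z-r)/\|z-r\|^p$ and bounds their liminfs separately; your version is, if anything, slightly cleaner.
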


\begin{proof}
Away from the deflated solution $r$, the function $\chi$ vanishes and we are
left with the position vector divided by the norm to some power. As the norm is positive, the
sign of the position vector is always preserved away from $r$. Let us now
consider the points close to $r$, i.e. in the open set $\mathscr{F} \cap
B(r,\delta)$. Here $\chi$ is always positive, and $z_j \ge 0$ as $z \in
\mathscr{F}$. Hence, for all points
in the open set $\mathscr{F} \cap B(r,\delta)$, we have that for all $j$,
\begin{equation}
\mathrm{sign}\left[H_j(z)\right] > 0.
\end{equation}
Hence, property \eqref{eq:comp_defl_sign_preservation} holds. Furthermore,
\begin{align}
\liminf\limits_{z_k\rightarrow r}\left|H_j(z)\right| &= \liminf\limits_{z_k\rightarrow r} H_j(z) \\
                                                   &= \underbrace{\liminf\limits_{z_k\rightarrow r} \frac{z_j}{\mathlarger{\|z-r\|^{p}}}}_{\ge 0} + \underbrace{\liminf\limits_{z_k\rightarrow r} \frac{\chi(z-r)}{\mathlarger{\|z-r\|^{p}}}}_{> 0} \\
                                                   &> 0.
\end{align}
Hence, property \eqref{eq:compl_defl_bounded_away} holds also.
\end{proof}

\subsection{Deflating several roots}
This strategy can be repeatedly applied to eliminate several solutions. Assume $m$ distinct solutions $r^1, ..., r^m$ are available.
Consider the composition of deflation operators
\begin{align}
\hat{G}(F,z) &= G(F,z;r^m) \circ_1 \cdots \circ_1 G(F,z;r^1),\notag\\
\hat{H}(z) &= H(z;r^m) \circ_1 \cdots \circ_1 H(z;r^1),
\end{align}
where $G(F,z;r^i)$ is a weak complementarity deflation operator, $H$ is a complementarity deflation operator
and $\circ_1$ indicates the composition with respect to the first argument, i.e.
\begin{align*}
G(F,z;s) \circ_1 G(F,z;r)= G(G(F,z;r),z;s).
\end{align*}
The deflated NCP residual is then
\begin{align}
\hat{\Psi}(z) = \Phi(\hat{H}(z), \hat{G}(F,z)).
\end{align}
An induction argument shows that multiple deflation is still effective if we consider sequences
$z_k\in\mathscr{F}\setminus\{r^1,...,r^m\}$ converging to any of the deflated roots: the norm
of $\hat{\Psi}$ will be bounded away from zero in the limit inferior. Similarly,
other solutions are preserved under the assumption that all roots are isolated.

When deflating multiple solutions it is important to apply a shift to the deflation operator.
For clarity of the argument, take $p=1$.
Away from previously found solutions, $\chi$ vanishes and we have
\begin{align*}
\hat{G}(F,z) = \frac{F(z)}{\prod\limits_i\|z - r^i\|} \text{ and } \hat{H}(z) = \frac{z}{\prod\limits_i\|z - r^i\|}.
\end{align*}
As any of the terms $\|z - r^i\|$ go to infinity, the deflated position vector and residual go to zero,
thus causing solvers to erroneously report convergence at points far from the deflated solutions.
As the denominators are the product of such terms, multiple deflation aggravates this behaviour.
This problem is solved by adding a shift to the deflated arguments. By defining the deflated
residual as
\begin{align}
\hat{\Psi}(z) = \Phi(\hat{H}(z) + \alpha z, \hat{G}(F,z) + \alpha F),
\end{align}
with $\alpha> 0$, as the product of the $\|z - r^i\|$ goes to infinity,
the deflated position vector and residual asymptotically behave as $\alpha z$ and $\alpha F$ respectively,
and hence the solver does not erroneously report convergence to spurious solutions.
The choice $\alpha = 1$ is natural, but it is sometimes advantageous to choose different values.
It is straightforward to see that shifting preserves the properties of the deflation operators. 

\section{Extension to MCPs}
It is possible to extend the semismooth rootfinding formulation to
MCPs, and to apply deflation in this case also.
Given MCP$(F, l, u)$ and $J = \{1, \dots, n\}$, define the index sets
\begin{align}
\label{eq:index_sets}
\begin{array}{ll}
J_{l\phantom{u}} := \{i\in J : -\infty<l_i<u_i=+\infty\},& J_u:=\{i\in J: -\infty=l_i<u_i<+\infty\},  \\
J_{lu}:= \{i\in J: -\infty<l_i\leq u_i<+\infty\},& J_f:=\{i\in J: -\infty=l_i<u_i=+\infty\}.  
\end{array}
\end{align}
$J_l$ and $J_u$ represent the variables with only lower and upper bounds respectively, 
$J_{lu}$ represents the variables with both lower and upper bounds, and $J_f$ represents
the free variables with no bounds.
The solutions of the MCP are the roots of the associated
MCP residual \cite{billups1995}
\begin{equation}
\Psi_i(z) = \Phi_i(z, F(z)) := 
\begin{dcases}
\phi(z_i-l_i,F_i(z)), & \text{ if } i\in J_l,\\
-\phi(u_i-z_i,-F_i(z)), & \text{ if } i\in J_u,\\
\phi(z_i-l_i,\phi(u_i-z_i,-F_i(z))), & \text{ if } i\in J_{lu},\\
-F_i(z), & \text{ if } i\in J_f.
\end{dcases}
\end{equation}
In this case, the deflated operator can be constructed via
\begin{equation}
\hat{\Psi}_i(z) :=
\begin{dcases}
\phi(H_i(z - l; r), G_i(F, z; r)), & \text{ if } i\in J_l,\\
-\phi(H_i(u-z; r), -G_i(F, z; r)), & \text{ if } i\in J_u,\\
\phi(H_i(z - l; r), \phi(H_i(u - z; r),-G_i(F, z; r))), & \text{ if } i\in J_{lu},\\
-G_i(F, z; r), & \text{ if } i\in J_f.
\end{dcases}
\end{equation}

By the properties of $G$, there exists at least one $j$ for which
\begin{equation}
\liminf \limits_{z_k \rightarrow r} \left| G_j(F, z; r) \right| > 0.
\end{equation}
The argument proceeds by considering the cases $j \in J_l, j \in J_u, j \in J_{lu}, j \in J_f$ in turn.
No matter which case applies, the strong complementarity operator
ensures that the deflated residual will not converge to zero as $z_k \rightarrow r$, as $u - z$ and
$z - l$ are both nonnegative vectors, as occur with NCPs.

\section{Examples}
\begin{table}
\centering
\begin{tabular}{c|c|c|c|c|c}
\toprule
{Problem} & dim & \# sol & \# sol found & \# iters & ${(p,\alpha)}$\\
\midrule[1pt]
Kojima and Shindoh (1986) & 4   & 2  & 2 & (9, 11) & (1.0, 0.5)\\
\hline
\rule{0pt}{2.5ex}
Aggarwal (1973) & 3 & 3 & 3 & (8, 5, 10) & (1.0, 1.0)\\
\hline
\rule{0pt}{2.5ex}
Konno and Kuno (1992) & 9 & 3 & 3 & (6, 22, 11) & (1.0, 0.5) \\
\hline
\rule{0pt}{2.5ex}
Gould (2015) & 4 & 3 & 3 & (5, 4, 4) & (2.0, 1.0) \\
\hline
\rule{0pt}{2.5ex}
Tin-Loi (2003) & 42 & 2 & 2 & (9, 11) & (1.0, 1.0) \\
\hline
\rule{0pt}{2.5ex}
Mathiesen (1987) & 4 & $\infty$ & 100 & 3--7 & (1.0, 1.0) \\
\bottomrule
\end{tabular}
\caption{Summary of the test problems. dim is the problem dimension, \# sol is the number of
known solutions, \# sol found is the number of solutions found, \# iters is the number of nonlinear iterations needed to find
each solution, and $(p, \alpha)$ are the deflation parameters used. All
solutions were found to an $\ell_2$ residual tolerance of less than $10^{-10}$.
Apart from the Mathiesen (1987) problem, our algorithm found all solutions of
each problem using only one initial guess in under two seconds on a laptop.}
\label{tab:algebraic_problems}
\end{table}

In this section we consider six NCP test problems. These tests are important as
they show whether deflation is actually useful in practice for finding
additional solutions. The test problems we have chosen are small algebraic
problems that span the range of applications of complementarity problems and are
significant in the literature for different reasons. The characteristics of
the problems and of our computational results are shown in table
\ref{tab:algebraic_problems}.

The source code for all examples is included in the supplementary
material.

\subsection{Kojima and Shindoh (1986)}
This problem was first proposed by Kojima and Shindoh \cite{kojima1986} and
is an NCP with $F:\mathbb{R}^4\rightarrow\mathbb{R}^4$ given by
\begin{align}
F(z)=\left[\begin{array}{l}
3z_1^2 + 2z_1z_2 + 2z_2^2 + z_3 + 3z_4 - 6\\
2z_1^2 + z_2^2   + z_1 + 10z_3 + 2z_4 - 2\\
3z_1^2 + z_1z_2  + 2z_2^2 +  2z_3 + 9z_4 - 9\\
 z_1^2 + 3z_2^2  + 2z_3 + 3z_4 - 3
\end{array}\right].
\end{align}
It admits two solutions,
\begin{align*}
\begin{array}{lclcl}
\bar{z}^1 = [1, 0, 3, 0]^T, & & \multirow{2}{*}{\text{with residuals}} & & F(\bar{z}^1)=[0, 31, 0, 4]^T,\\
\bar{z}^2 = [\sqrt{6}/2, 0, 0, 1/2]^T,& & & & F(\bar{z}^2)=[0,2+\sqrt{6}/2, 0, 0]^T.
\end{array}
\end{align*}
This problem was used again by Dirkse and Ferris \cite{dirske1995} as an
example of a problem in which classical Newton solvers struggle to find a solution.
This is because one of the two solutions, $\bar{z}^2$, has a degenerate third
component, i.e. $\bar{z}^2_3=F(\bar{z}^2)_3=0$. This causes
$\Phi_3(\bar{z}^2)=\phi_{FB}(\bar{z}^2_3,F_3(\bar{z}^2))=\phi_{FB}(0,0)$. The
nondifferentiability of $\phi_{FB}$ induces nonexistence of the
Jacobian of $\Phi$ at the origin. Another feature of this problem is that the linear
complementarity problem formed through linearisation of the residual $F$ around
zero has no solution, causing difficulties for the Josephy--Newton method there
\cite{josephy1979}.

This is a relatively easy problem to solve and deflation successfully finds both
solutions with many combinations of power, shift and initial guess. We chose $p=1$,
$\alpha=0.5$ and initial guess $[2, \dots, 2]^T$.

\subsection{Aggarwal (1973)}
This is a Nash bimatrix equilbrium problem arising in game theory.
This kind of problem was first introduced by von Neumann and
Morgenstern \cite{vonneumann1945} and the existence of its solutions was
further studied by Nash \cite{nash1951} and Lemke and Howson
\cite{lemke1964}. In the same paper, Lemke and Howson also presented a numerical
algorithm for computing solutions to these kinds of problems. This particular
example was introduced by Aggarwal \cite{aggarwal1973} to prove that it is
impossible to find all solutions of such problems using a modification of
the Lemke--Howson method that had been conjectured to compute all solutions.

The problem consists of finding the equilibrium points of a bimatrix (non-zero
sum, two person) game. Let $A$ and $B$ be the $n \times n$ payoff matrices of
players $1$ and $2$ respectively. Let us assume that player $1$ plays the
$i^{th}$ pure strategy and player $2$ selects the $j^{th}$ pure strategy amongst
the $n$ strategies available to each. The entries of $A$ and $B$, $a_{i,j}$ and
$b_{i,j}$ respectively, correspond to the payoff received by each player. It is then possible
to define a mixed strategy for a player which consists of a $n \times 1$ vector
$x$ such that $x_i\geq0$ and $x_1+...+x_n=1$. Denote by $x$ and $y$ the mixed
strategies for player $1$ and $2$ respectively. The entries of these vectors
stand for the probability of the player adopting the corresponding pure
strategy.  The expected payoffs of the two players are then $x^TAb$ and $x^TBy$
respectively. An equilibrium point $(x^*,y^*)$ is reached when, for all $x$,
$y$,
\begin{align}
(x^*)^TAy^*\geq x^TAy^*,\hspace{12pt}\text{and}\hspace{12pt}(x^*)^TBy^*\geq (x^*)^TBy,
\end{align}
i.e. neither player can unilaterally improve their payoff.

We will consider Aggarwal's counterexample
\cite{aggarwal1973}, which admits three Nash equilibria.
These equilibria are related to the solutions of the NCP with residual
\begin{align}
F(z) =
\begin{pmatrix}
\bar{A}y - e \\
\bar{B}^Tx - e
\end{pmatrix}
,
\end{align}
where $z = [x, y]^T$ and $e = [1, 1, \dots, 1]^T$,
$\bar{A}$ and $\bar{B}$ are positive-valued loss matrices related to $A$ and $B$
respectively, and $x$ and $y$ relate to the mixed strategy adopted by each player \cite[\S 1.4]{murty1988}.
The data for this problem is
\begin{align*}
\bar{A} =
\begin{bmatrix}
30 & 20 \\
10 & 25
\end{bmatrix},
\text{ and }
\bar{B} =
\begin{bmatrix}
30 & 10 \\
20 & 25
\end{bmatrix}.
\end{align*}
The initial guess we considered was $[0,0,0,1/30]$, with
$(p,\alpha)=(1, 1)$, although other choices yield the same results. The
three solutions found are
\begin{align*}
\begin{array}{lclcl}
\bar{z}^1 = [0, 1/20, 1/10, 0]^T, & & \multirow{4}{*}{\text{with residuals}} & & F(\bar{z}^1)=[2, 0, 0, 1/4]^T,\\
\bar{z}^2 = [1/110, 4/110, 1/110, 4/110]^T,& & & & F(\bar{z}^2)=[0, 0, 0, 0]^T,\\
\bar{z}^3 = [1/10, 0, 0, 1/20]^T, & & & & F(\bar{z}^3)=[0, 1/4, 2, 0]^T.\\
\end{array}
\end{align*}
Aggarwal observed that the conjectured scheme mentioned above could compute
$\bar{z}^1$ and $\bar{z}^3$, but could not compute $\bar{z}^2$.

\subsection{Konno and Kuno (1992)}
This problem, proposed by Konno and Kuno \cite{konno1992}, is a linear
multiplicative programming problem, i.e. a problem in the form
\begin{align}
\min\limits_x f(x)=(c^Tx+c_0)\cdot(d^Tx+d_0)\hspace{12pt}\text{s.t.}\hspace{12pt}Ax\geq b,
\end{align}
where $A \in \mathbb{R}^{m\times n}$, $b \in \mathbb{R}^m$, $c$, $d$, $x \in
\mathbb{R}^n$ and $c_0$, $d_0 \in \mathbb{R}$. As this is an optimization
problem with inequality constraints, its optimality system can be reformulated as an MCP. The
first order KKT optimality conditions can be written as
\begin{align}
\nabla f - A^T\lambda = 0,\hspace{12pt} \lambda_i\geq 0,\hspace{12pt}(Ax-b)_i\geq 0,\hspace{12pt}\lambda_i(Ax-b)_i=0\hspace{12pt}\text{for all }i,
\end{align}
where $\lambda\in\mathbb{R}^m$. This is equivalent to MCP$(F(z), l, u)$ with $z\in\mathbb{R}^{n+m}$ and
\begin{align}
z = \left[\begin{array}{c}x \\\hline\lambda\end{array}\right],\hspace{10pt}l=\left[\begin{array}{c}-\infty \\\hline 0\end{array}\right], \hspace{10pt}u=\left[\begin{array}{c}+\infty \\\hline +\infty \end{array}\right],\hspace{10pt}F=\left[\begin{array}{c}\nabla f - A^T\lambda \\\hline Ax-b \end{array}\right].
\end{align}
The problem proposed by Konno and Kuno makes the choice $n=2$, $m=7$, $c^T=[1,1]$, $d^T=[1,-1]$, $c_0=d_0=0$, and
\begin{align}
A = 
\begin{bmatrix}
-1/5 & -2/5\\1/4 & -7/25\\7/20 & 7/20\\14/25 & 7/25\\7/12 & 0\\-28/65 & 7/65\\-14/31 & -7/31
\end{bmatrix},
\text{ and }
b = 
\begin{bmatrix}
6/5\\ 21/25\\ 7/10\\ 14/25\\ 7/12\\ 84/65\\ 42/31
\end{bmatrix}.
\end{align}

Our solver implementation currently supports only NCPs.
For this reason we applied the change of variables 
\begin{align}
z'_i=z_i+5\hspace{12pt}\text{if}\hspace{12pt}1\leq i\leq 2;\hspace{24pt}z'_i=z_i\hspace{12pt}\text{if}\hspace{12pt}3\leq i\leq 9,
\end{align}
to move the feasible region to the first quadrant of $\mathbb{R}^2$. This does not change the
complementarity problem as the bounds are enforced anyway from the inequality
constraints. The problem, thus formulated as NCP$(F(z'))$, can now be solved
using our method. These problems are in general quite difficult; linear
multiplicative programming problems are NP-hard \cite{matsui1996,benson1997}.
However, we were able to find all the solutions starting from the same
initial guess $[1/10, 36/10, 0, \dots, 0]^T$ and with $(p,\alpha)=(1,0.5)$
(other choices yield the same results). The three solutions found for
this problem (in the original coordinate system) are
\begin{flalign}
& \bar{z}^1 = [0, 0, 0, 0, 0, 0, 0, 0, 0]^T,\\
& \bar{z}^2 = [-2, 4, 0, 0, 144/7, 0, 0, 52/7, 0]^T,\\
& \bar{z}^3 = [0, -3, 10, 50/7, 0, 0, 0, 0, 0]^T.
\end{flalign}
with residuals
\begin{flalign}
& F(\bar{z}^1)=[0, 0, 6/5, 21/25,7/10,14/25,7/12,84/65,42/31]^T,\\
& F(\bar{z}^2)=[0, 0, 12/5, 63/25,0,14/25,7/4,0,42/31]^T,\\
& F(\bar{z}^3)=[0, 0, 0, 0,7/4,7/5,7/12,21/13,21/31]^T.
\end{flalign}

\subsection{Gould (2015)}
This is a nonconvex quadratic programming problem with linear constraints suggested by N.\
I.\ M.\ Gould in personal communication. It is a quadratic minimisation problem
with an indefinite Hessian of the form
\begin{align*}
\min\limits_x f(x)=-2(x_1-1/4)^2+2(x_2-1/2)^2,\hspace{15pt}\text{s.t.}\hspace{15pt}\left\{\begin{array}{r}x_1+x_2\leq 1,\\6x_1+2x_2\leq 3,\\x_1,x_2\geq 0.\end{array}\right.
\end{align*}
Such problems are known to be NP-complete \cite{vavasis1991}. The first order KKT
optimality conditions yield an NCP with residual
\begin{align}
F(z) = \left[\begin{array}{r}-4(x_1-1/4) + 3\lambda_1+\lambda_2\\4(x_2-1/2)+\lambda_1+\lambda_2\\3-6x_1-2x_2\\1-x_1-x_2\end{array}\right],
\end{align}
where $z=[x,\lambda]$, with $\lambda=[\lambda_1,\lambda_2]$ the vector of the
Lagrange multipliers associated with $F_3(z)\geq0$ and $F_4(z)\geq0$
respectively. Note that in this case it is not necessary to use Lagrange
multipliers to enforce $x\geq0$ as this is implicit in the NCP formulation. The
nonconvexity of the function $f$ makes this problem difficult; it attains two
minima with similar functional values and has a saddle point at $x=[1/4,1/2]^T$. The
standard method for solving a problem of this kind is an interior point method
that searches for a central path from the initial guess to the solution via
continuation on a barrier function parameter \cite{nocedal2006}; in this
case, the central path is pathological, with different paths converging to the
different minima.

We directly solve the arising NCP with the semismooth Newton method and deflation.
The initial guess used was
$[3/10, \dots, 3/10]^T$ and the deflation parameters used were
$(p,\alpha)=(2, 1)$ (other choices yield the same results).
The three solutions found were
\begin{align*}
\begin{array}{lcl}
\bar{z}^1 \mathrlap{\text{ (global) }}\phantom{\text{ (saddle) }}= [0, 1/2, 0,0]^T, & \multirow{3}{*}{\text{with residuals}} & F(\bar{z}^1)=[1, 0, 1, 1/2]^T,\\
\bar{z}^2 \text{ (saddle) }= [1/4, 1/2, 0, 0]^T,& & F(\bar{z}^2)=[0, 0, 1/4, 1/4]^T,\\
\bar{z}^3 \mathrlap{\text{ (local) }}\phantom{\text{ (saddle) }}= [11/32, 15/32, 1/8, 0]^T, & & F(\bar{z}^3)=[0, 0, 0, 3/16]^T.
\end{array}
\end{align*}
The KKT conditions make no distinction between minima and saddle points, and
hence the solver finds both kinds of stationary points.

\subsection{Tin-Loi and Tseng (2003)}
This is a quasibrittle fracture problem from the MCPLIB collection of
MCPs collated by Dirske and Ferris \cite{dirske1995}. Such problems frequently
support multiple solutions, and computing them is of physical importance
\cite{bolzon1997,tinloi2003}.
This linear complementarity problem was suggested to us in personal communication
by T.\ S.\ Munson. The residual is of the form
\begin{align}
F(z) = Az + b,
\end{align}
with $A \in \mathbb{R}^{n \times n}$, $b \in \mathbb{R}^n$, and $n = 42$. The
data file from MCPLIB that defines $A$ and $b$ is included in the supplementary material.

We chose $[2/5, \dots, 2/5]^T$ as the initial guess and used
$(p, \alpha) = (1, 1)$. For this problem we used the
standard projected Armijo linesearch and not the linesearch of Sun et al.\ 
\cite{sun2002}, as this proved to be more efficient for this problem. Both known
solutions were found.

\subsection{Mathiesen (1987)}
\label{sec:Mathiesen}
This example considers the computation of a Walrasian equilibrium
\cite{arrow1954}. Under strong assumptions on consumer preferences, such
problems have unique solutions; in general, these problems permit multiple
equilibria, a fact with significant consequences for general equilibrium theory
\cite{rizvi2006}.  This problem was first proposed by Mathiesen
\cite{mathiesen1987}, and was used as a test problem by Mangarasian and Solodov
\cite{mangasarian1993} and by Pang and Gabriel \cite{pang1993}. This problem can
be formulated as an NCP with residual
\begin{align}
F(z)=\left[\begin{array}{r} -z_2 + z_3 +z_4\\ z_1 - 0.75(z_3+\gamma z_4)/z_2\\-z_1-0.25(z_3+\gamma z_4)/z_3+1\\\gamma-z_1\end{array}\right],
\end{align}
where $\gamma$ is a positive parameter. The reason we chose this problem is that
it admits a continuum of solutions. We can thus test the behaviour of
our deflation technique on a case in which all the equilibrium points cannot be
found in finite time, and where the isolated solutions assumption does not hold. The parameter
$\gamma$ determines the type of the solutions of this problem. In particular,
if $\gamma < 3/4$, the solutions are characterized by
\begin{align*}
\bar{z} =
\begin{bmatrix}
\gamma \\ 3\lambda(1-\gamma)/\gamma \\ \lambda \\ 2\lambda(1-\gamma)/\gamma-\lambda
\end{bmatrix}
\text{ with residual }
\mathrlap{
F(\bar{z}) =
\begin{bmatrix}
0 \\ 0 \\ 0 \\ 0
\end{bmatrix},
}
\phantom{
F(\bar{z}) =
\begin{bmatrix}
0 \\ 0 \\ 0 \\ \gamma - 3/4
\end{bmatrix},
}
\end{align*}
and if $\gamma > 3/4$,
\begin{align*}
\mathrlap{
\bar{z} =
\begin{bmatrix}
3/4 \\ \lambda/2 \\ \lambda/2 \\ 0
\end{bmatrix}}
\phantom{
\bar{z} =
\begin{bmatrix}
\gamma \\ 3\lambda(1-\gamma)/\gamma \\ \lambda \\
2\lambda(1-\gamma)/\gamma-\lambda
\end{bmatrix}
}
\text{ with residual }
F(\bar{z}) =
\begin{bmatrix}
0 \\ 0 \\ 0 \\ \gamma - 3/4
\end{bmatrix},
\end{align*}
for any $\lambda > 0$ \cite{pang1993}.

Additionally, there exist sequences $z_k \rightarrow 0$ such that
\begin{equation*}
\lim\limits_{z_k \rightarrow 0}F(z_k) = [0, 0, 1, 0]^T,
\end{equation*}
which satisfies the conditions to be satisfied at a solution. For example, take $z_k = [s_k, s_k, 0, 0]^T$,
$s_k \downarrow 0$. Hence, although $F$ is
singular at zero, the solver might erroneously converge there. Unfortunately, this
happens in practice, and the solver eventually terminates with a division by zero error.
To obtain our results, we first deflated this point before actually solving the problem. 
This is similar in spirit to the approach advocated by Kanzow \cite{kanzow2000};
it successfully prevents the solver iterates from
approaching it. With the choice $\gamma=1$,
$(p,\alpha, \delta)=(1,1, 10^{-8})$ and initial guess $[15, \dots, 15]^T$, the solver found $100$
different solutions (in around $5.5$ seconds), corresponding to $100$ different
values of $\lambda$.

We would like to emphasize again that this problem does not have isolated
solutions, a requirement of our earlier theory. Having isolated solutions is not a necessary
condition for the elimination of known solutions; instead, it is a necessary condition
for ensuring that all existing solutions are preserved. Every time deflation was
applied in this problem, all the other solutions in the support of
$\chi$ from the deflated problem were removed. This is due to the fact that we artificially
add a positive value to all the entries of the position vector in the
neighbourhood of the solution; after this, they too are eliminated from the
deflated problem. Nevertheless, this example demonstrates the strength of the
approach: the combination of the semismooth Newton solver and deflation
identified a hundred solutions before failing to converge.

\section{Conclusion}
In this paper we constructed a new class of deflation operators, complementarity
deflation operators, and a corresponding theory that guarantees their success.
We showed that by applying these operators to the arguments of a semismooth
reformulation of an NCP, its residual is prevented from converging to zero for
any sequence converging to the known solution.

The effectiveness of the approach was demonstrated through its application to
several problems that support multiple solutions. For all problems with a finite
solution set, we identified an initial guess and parameter values that finds all
known solutions. However, it would be desirable to design robust complementarity
deflation operators without parameters, or to devise a rigorously grounded
scheme for choosing them.

An important future extension of these ideas is to the infinite-dimensional
case \cite{ulbrich2011}. These arise in many important applications, including in optimization
constrained by partial differential equations and inequality constraints.

\bibliographystyle{siam}
\bibliography{literature}

\begin{thebibliography}{10}

\bibitem{aggarwal1973}
{\sc V.~Aggarwal}, {\em On the generation of all equilibrium points for
  bimatrix games through the {Lemke--Howson} algorithm}, Mathematical
  Programming, 4 (1973), pp.~233--234.

\bibitem{arrow1954}
{\sc K.~J. Arrow and G.~Debreu}, {\em Existence of an equilibrium for a
  competitive economy}, Econometrica, 22 (1954), pp.~265--290.

\bibitem{benson1997}
{\sc H.~P. Benson and G.~M. Boger}, {\em Multiplicative programming problems:
  analysis and efficient point search heuristic}, Journal of Optimization
  Theory and Applications, 94 (1997), pp.~487--510.

\bibitem{billups1995}
{\sc S.~C. Billups}, {\em Algorithms for Complementarity Problems and
  Generalized Equations}, PhD thesis, University of Wisconsin-Madison, 1995.

\bibitem{bolzon1997}
{\sc G.~Bolzon, G.~Maier, and F.~Tin-Loi}, {\em On multiplicity of solutions in
  quasi-brittle fracture computations}, Computational Mechanics, 19 (1997),
  pp.~511--516.

\bibitem{brown1971}
{\sc K.~M. Brown and W.~B. Gearhart}, {\em Deflation techniques for the
  calculation of further solutions of a nonlinear system}, Numerische
  Mathematik, 16 (1971), pp.~334--342.

\bibitem{dirske1995}
{\sc S.~P. Dirkse and M.~C. Ferris}, {\em {MCPLIB}: a collection of nonlinear
  mixed complementarity problems}, Optimization Methods and Software, 5 (1995),
  pp.~319--345.

\bibitem{facchinei1997b}
{\sc F.~Facchinei, A.~Fischer, and C.~Kanzow}, {\em A semismooth {N}ewton
  method for variational inequalities: The case of box constraints}, in
  Complementarity and Variational Problems: State of the Art, M.~C. Ferris and
  J.-S. Pang, eds., vol.~92 of Proceedings in Applied Mathematics, SIAM, 1997,
  pp.~76--90.

\bibitem{farrell2014}
{\sc P.~E. Farrell, \'A. Birkisson, and S.~W. Funke}, {\em Deflation techniques
  for finding distinct solutions of nonlinear partial differential equations},
  SIAM Journal on Scientific Computing, 37 (2015), pp.~A2026--A2045.

\bibitem{ferris1997}
{\sc M.~C. Ferris and J.-S. Pang}, {\em Engineering and economic applications
  of complementarity problems}, SIAM Review, 39 (1997), pp.~669--713.

\bibitem{fischer1992}
{\sc A.~Fischer}, {\em A special {N}ewton-type optimization method},
  Optimization, 24 (1992), pp.~269--284.

\bibitem{josephy1979}
{\sc N.~H. Josephy}, {\em {N}ewton's method for generalized equations.}, Tech.
  Report 1965, Mathematics Research Centre, University of Madison--Wisconsin,
  1979.

\bibitem{judice1988}
{\sc J.~J. Judice and G.~Mitra}, {\em An enumerative method for the solution of
  linear complementarity problems}, European Journal of Operational Research,
  36 (1988), pp.~122--128.

\bibitem{kanzow2000}
{\sc C.~Kanzow}, {\em Global optimization techniques for mixed complementarity
  problems}, Journal of Global Optimization, 16 (2000), pp.~1--21.

\bibitem{kojima1986}
{\sc M.~Kojima and S.~Shindo}, {\em Extensions of {N}ewton and quasi-{N}ewton
  methods to systems of {PC}$^1$ equations}, Journal of Operations Research
  Society of Japan, 29 (1986), pp.~352--374.

\bibitem{konno1992}
{\sc H.~Konno and T.~Kuno}, {\em Linear multiplicative programming},
  Mathematical Programming, 56 (1992), pp.~51--64.

\bibitem{lemke1964}
{\sc C.~E. Lemke and J.~T. Howson}, {\em Equilibrium points of bimatrix games},
  Journal of the Society for Industrial and Applied Mathematics, 12 (1964),
  pp.~413--423.

\bibitem{mangasarian1993}
{\sc O.~L. Mangasarian and M.~V. Solodov}, {\em Nonlinear complementarity as
  unconstrained and constrained minimization}, Mathematical Programming, 62
  (1993), pp.~277--297.

\bibitem{mathiesen1987}
{\sc L.~Mathiesen}, {\em An algorithm based on a sequence of linear
  complementarity problems applied to a {W}alrasian equilibrium model: An
  example}, Mathematical Programming, 37 (1987), pp.~1--18.

\bibitem{matsui1996}
{\sc T.~Matsui}, {\em {NP}-hardness of linear multiplicative programming and
  related problems}, Journal of Global Optimization, 9 (1996), pp.~113--119.

\bibitem{murty1988}
{\sc K.~G. Murty}, {\em Linear Complementarity, Linear and Nonlinear
  Programming}, vol.~3 of Sigma Series in Applied Mathematics, Heldermann
  Verlag, 1988.

\bibitem{nash1951}
{\sc J.~Nash}, {\em Non-cooperative games}, Annals of Mathematics, 54 (1951),
  pp.~286--295.

\bibitem{nocedal2006}
{\sc J.~Nocedal and S.~J Wright}, {\em Numerical Optimization}, Springer
  Verlag, 2006.

\bibitem{pang1993}
{\sc J.-S. Pang and S.~A. Gabriel}, {\em {NE/SQP}: A robust algorithm for the
  nonlinear complementarity problem}, Mathematical Programming, 60 (1993),
  pp.~295--337.

\bibitem{rizvi2006}
{\sc S.~A.~T. Rizvi}, {\em The {Sonnenschein-Mantel-Debreu} results after
  thirty years}, History of Political Economy, 38 (2006), pp.~228--245.

\bibitem{sun2002}
{\sc D.~Sun, R.~S. Womersley, and H.~Qi}, {\em A feasible semismooth
  asymptotically {N}ewton method for mixed complementarity problems},
  Mathematical Programming, 94 (2002), pp.~167--187.

\bibitem{tinloi2003}
{\sc F.~Tin-Loi and P.~Tseng}, {\em Efficient computation of multiple solutions
  in quasibrittle fracture analysis}, Computer Methods in Applied Mechanics and
  Engineering, 192 (2003), pp.~1377--1388.

\bibitem{ulbrich2011}
{\sc M.~Ulbrich}, {\em Semismooth {N}ewton Methods for Variational Inequalities
  and Constrained Optimization Problems in Function Spaces}, vol.~11 of
  MOS-SIAM Series on Optimization, SIAM, 2011.

\bibitem{vavasis1991}
{\sc S.~A. Vavasis}, {\em Nonlinear Optimization: Complexity Issues}, vol.~8 of
  International Series of Monographs on Computer Science Series, Oxford
  University Press, 1991.

\bibitem{vonneumann1945}
{\sc J.~von Neumann and O.~Morgenstern}, {\em Theory of games and economic
  behavior}, Bulletin of the American Mathematical Society, 51 (1945),
  pp.~498--504.

\bibitem{vonstengel2002}
{\sc B.~von Stengel, A.~van Den~Elzen, and D.~Talman}, {\em Computing normal
  form perfect equilibria for extensive two-person games}, Econometrica, 70
  (2002), pp.~693--715.

\bibitem{wilkinson1963}
{\sc J.~H. Wilkinson}, {\em {Rounding Errors in Algebraic Processes}}, vol.~32
  of Notes on Applied Science, H.M.S.O., 1963.

\end{thebibliography}

\end{document}